\documentclass[a4paper]{amsart}
\usepackage{amssymb}
\usepackage{array,multirow}
\usepackage{amsmath} 
\usepackage{pifont}
\usepackage[usenames]{color}
\usepackage{amsfonts}
\usepackage[all]{xypic}
\usepackage{graphicx}
\usepackage{psfrag}
\usepackage{verbatim}
\usepackage{pstricks}
\usepackage[latin1]{inputenc}
\usepackage{mathrsfs}
\usepackage[all,knot]{xy}
\usepackage{amssymb, amsthm, amsmath, pifont} 
\usepackage{enumerate}
\usepackage{pdfpages}

\newtheorem{thm}{Theorem}
\newtheorem{corollary}[thm]{Corollary}
\newtheorem{lemma}[thm]{Lemma}
\newtheorem{example}[thm]{Example}
\newtheorem{definition}[thm]{Definition}
\newtheorem{remark}[thm]{Remark}
\newtheorem{prop}[thm]{Proposition}
\newtheorem{claim}[thm]{Claim}
\newtheorem{observation}[thm]{Observation}

\newenvironment{Example}{\begin{example}\rm}{\end{example}}

\newenvironment{Remark}{\begin{remark}\rm}{\end{remark}}

\def\Z{\mathbb{Z}}

\def\epsilon{\varepsilon}

\def\s{{\sigma}}

\begin{document}
\address{Department of Mathematics, Boston College, Chestnut Hill, MA 02467, USA}
\author{Peter Feller}
\email{peter.feller.2@bc.edu}
\thanks{The author gratefully acknowledges support by the Swiss National Science Foundation Grant~155477.}
\keywords{Topological slice genus, Alexander polynomial}
\subjclass[2010]{57M25,  57M27}
\title[Alexander polynomial bounds for the topological slice genus]{The degree of the Alexander polynomial is an upper bound for the topological slice genus}
\begin{abstract}
We use the famous knot-theoretic consequence of Freedman's disc theorem\textemdash knots with trivial Alexander polynomial bound a locally-flat disc in the $4$-ball\textemdash to prove the following generalization. The degree of the Alexander polynomial of a knot is an upper bound for twice its topological slice genus. We provide examples of knots where this determines the topological slice genus.
\end{abstract}
\maketitle
\section{Introduction}For a knot $K$\textemdash a smooth and oriented embedding of the unit circle $S^1$ into the unit $3$-sphere $S^3$\textemdash the \emph{topological slice genus} $g_4^{\rm top}(K)$ is the minimal genus of locally-flat, oriented
surfaces $S$ in the closed unit $4$-ball $B^4$ with oriented boundary $K\subset \partial B^4=S^3$. A celebrated theorem of Freedman asserts that every knot $K$ with trivial Alexander polynomial is \emph{topologically slice}, i.e.~$g_4^{\rm top}(K)$ equals $0$~\cite[Theorem~1.13]{Freedman_82_TheTopOfFour-dimensionalManifolds}; see also~\cite[11.7B]{FreedmanQuinn_90_TopOf4Manifolds} and~\cite[Appendix]{GaroufalidisTeichner_04_OnKnotswithtrivialAlex}. Here, the \emph{Alexander polynomial} $\Delta_K$, first introduced by Alexander~\cite{Alexander_28_TopInvsOfKnotsAndLinks},
is the Laurent polynomial with integer coefficients in the indeterminate $t$
defined by
\[\det\left(M\sqrt{t}-M^T\frac{1}{\sqrt{t}}\right),\]
where $M$ is any Seifert matrix for $K$ and $M^T$ is its transpose.
The \emph{degree} $\deg(\Delta_K)$ of the Alexander polynomial $\Delta_K$ is the difference of the largest and the smallest exponent among the exponents of the  monomials of $\Delta_K$; i.e.~$\deg(\Delta_K)$ is the breadth of~$\Delta_K$.
\begin{thm}\label{thm:alexupperboundtopslicegenus}
For every knot, the degree of its Alexander polynomial 
is greater than or equal to twice its topological slice genus.
\end{thm}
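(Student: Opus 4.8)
The plan is to combine Freedman's theorem with a genus reduction of a Seifert surface. Write $2n=\deg\Delta_K$ and fix a Seifert surface $F$ for $K$ of genus $g$ with Seifert matrix $V$ of size $2g$, so that $\Delta_K\doteq\det(tV-V^T)$; since this palindromic polynomial has breadth $2n$ and its two extreme (possible) coefficients both equal $\det V$, we get $n\le g$, and if $g=n$ we are already done because $g_4^{\rm top}(K)\le g_3(K)\le n$. So assume $g>n$ and aim to build a locally-flat, oriented surface of genus $n$ in $B^4$ bounded by $K$.

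The crux is a Seifert-theoretic statement that I will isolate as a Main Lemma: \emph{after possibly replacing $F$ by another Seifert surface for $K$, the intersection lattice $(H_1(F),\langle\,,\,\rangle)$ admits a direct summand $W$ of rank $2(g-n)$ on which $\langle\,,\,\rangle$ is unimodular and on which the Seifert form restricts to one with trivial Alexander polynomial, $\det(t\,V|_W-(V|_W)^T)\doteq 1$.} Granting this, the theorem follows at once. Realize a symplectic basis of $W$ by disjoint simple closed curves in standard position on $F$, so that a regular neighborhood of their union is a subsurface $\hat F\subseteq F$ of genus $g-n$ with $H_1(\hat F)$ mapping isomorphically onto $W$ and with connected, hence separating, boundary $\gamma=\partial\hat F$. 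Then $\hat F$ is a Seifert surface for the knot $\gamma$ with Seifert matrix $V|_W$, so $\Delta_\gamma\doteq 1$, and Freedman's theorem provides a locally-flat disc $D$ in $B^4$ with $\partial D=\gamma$. The complement $F':=F\setminus\operatorname{int}\hat F$ is a smooth, connected, oriented cobordism of genus $n$ in $S^3$ from $\gamma$ to $K$. Writing $B^4=(S^3\times[0,1])\cup_{S^3\times\{1\}}B^4$, putting $K$ on the outer boundary, inserting a vertical extension of $F'$ into $S^3\times[0,1]$ and capping the resulting copy of $\gamma$ with $D$ in the inner $B^4$ produces a locally-flat, oriented surface in $B^4$ with boundary $K$ and genus $n$ (gluing a locally-flat disc to a smooth surface along a smooth circle stays locally flat). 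Hence $g_4^{\rm top}(K)\le n=\tfrac12\deg\Delta_K$.

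What remains is the Main Lemma, and I expect essentially all of the difficulty to live there; it is also the only place where Freedman's theorem is invoked, so the hypothesis that $\Delta_K$ has small breadth must be converted into the algebraic data it needs there. A first, insufficient attempt uses the left kernel $N$ of $V$: breadth $<2g$ forces $\det V=0$, so $N\ne 0$; its saturation is an isotropic direct summand of $\langle\,,\,\rangle$, it extends to a unimodular-symplectic summand $W$ of rank $2\dim N$, and a short computation shows $\det(t\,V|_W-(V|_W)^T)\doteq t^{\dim N}\doteq 1$, because the only non-vanishing block of $V|_W$ is, up to transpose, the perfect intersection pairing between $N$ and its chosen complement. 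This yields $g_4^{\rm top}(K)\le g-\dim N$, but $\dim N$ need not reach $g-n$ for the surface one starts from, since the rank of $V$ can exceed $g+n$. One must therefore first arrange, by modifying the Seifert surface of $K$ — S-equivalence moves realized geometrically: handle slides and (de)stabilizations, including stabilizations whose new handle links the old bands — that $V$ has nullity exactly $g-n$, equivalently that its ``reduced rank'' $\operatorname{rank}(V)-g$ attains its minimum possible value $n=\tfrac12\deg\Delta_K$; or, more flexibly, one produces the summand $W$ with $\det(t\,V|_W-(V|_W)^T)\doteq 1$ directly, without insisting that it come from the kernel. Proving that one of these can be done — turning the degree of the Alexander polynomial into the existence of a large enough ``Alexander-trivial'' summand while keeping every matrix manipulation honestly geometric on a Seifert surface of $K$ — is, I expect, the main obstacle; everything outside the Main Lemma, namely the use of Freedman's theorem and the surface surgery above, is formal.
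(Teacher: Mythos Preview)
Your reduction to the Main Lemma is exactly the paper's strategy (its Proposition~\ref{prop:splitting}), and your surface surgery together with the application of Freedman's theorem match the paper's deduction of Theorem~\ref{thm:alexupperboundtopslicegenus} from that proposition. The gap is precisely where you locate it: you do not prove the Main Lemma.

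The paper fills this gap with a purely linear-algebraic argument (Lemma~\ref{lemma:M}) that works on \emph{any} Seifert surface, with no modification of $F$ required. The inductive step: if $\det M_{2g}=0$, pick a primitive kernel vector and change basis so the last column of $M_{2g}$ vanishes; unimodularity of $M_{2g}-M_{2g}^T$ forces the last row to have content~$1$, so further column operations bring it to $(0,\dots,0,1,0)$; one more cleanup symmetrizes the $(2g{-}1)$-st row and column and leaves the bottom $2\times 2$ block equal to $\left[\begin{smallmatrix}0&0\\1&0\end{smallmatrix}\right]$. Iterate on the remaining $(2g{-}2)\times(2g{-}2)$ block until its determinant is nonzero. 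The resulting bottom-right block of size $2g-2d$ then has $\Delta=1$, and one reads off $2d=\deg\Delta_K$. So the summand $W$ you want already lives on the original surface, over~$\Z$; no S-equivalence is needed. Your proposed S-equivalence route is not wrong, though---the paper remarks that this was in fact the author's first argument, later superseded by the observation above---and your first-attempt bound $g_4^{\rm top}(K)\le g-\dim\ker V$ is, as you correctly diagnose, genuinely too weak without further work.
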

An appealing way of summarizing Theorem~\ref{thm:alexupperboundtopslicegenus} and the classical genus bound of the Alexander polynomial is the following. For all knots $K$, we have
\[2g_4^{\rm top}(K)\leq\deg(\Delta_K)\leq 2g(K)
,\]
where $g(K)$ denotes the genus of $K$\textemdash the minimal genus of Seifert surfaces for $K$.
Theorem~\ref{thm:alexupperboundtopslicegenus} 
determines $g_4^{\rm top}$ for many knots including some of small crossing number; examples are provided in Section~\ref{sec:app}.

The following purely 3-dimensional proposition reduces Theorem 1 to the genus zero case. The proof of this proposition, detailed in Section~\ref{sec:proof}, is completely elementary whereas the genus zero case uses the entire Freedman machine of infinite constructions in topological 4-manifold theory.

\begin{prop}\label{prop:splitting}
Let $K$ be a knot. Every Seifert surface $S$ of $K$ contains a separating simple closed curve $L$ with the following properties:
\begin{itemize}
\item The Alexander polynomial of $L$ (as a knot in $S^3$) is trivial.
\item The connected component $C$ of $S\setminus L$ that does not contain $K$ is a Seifert surface for $L$ with
\[2\mbox{\rm genus}\, (C)=2\mbox{\rm genus}\,(S)-\deg(\Delta_K).\]
\end{itemize}
\end{prop}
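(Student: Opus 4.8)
The plan is to reduce the proposition to linear algebra. Fix a symplectic basis of $H_1(S)$ and let $M$ be the resulting Seifert matrix, so $M$ is an integral $(2g)\times(2g)$ matrix with $\det(M - M^{T}) = \pm1$, where $g=\mathrm{genus}(S)$, and $\Delta_K(t)\doteq t^{-g}\det(tM - M^{T})$. Since $\det(tM - M^{T})$ is palindromic of degree at most $2g$, its breadth — which is $\deg\Delta_K$ — equals $2g-2e$, where $e$ is the order of vanishing of $\det(tM - M^{T})$ at $t=0$; hence the subsurface $C$ demanded by the proposition must have genus exactly $e$. So the proposition follows from the following algebraic statement, together with the standard surface-topology fact that every direct summand $U\subseteq H_1(S)$ of rank $2e$ on which the intersection form is unimodular is realized by an embedded genus-$e$ subsurface $C\subseteq S$ with a single boundary curve $L$ and connected complement: \emph{if $M$ is integral with $\det(M - M^{T})=\pm1$ and $e$ is the order of vanishing of $\det(tM-M^{T})$ at $0$, then $\mathbb{Z}^{2g}$ has a rank-$2e$ summand $U$ on which $\omega:=M-M^{T}$ is unimodular and $M$ restricts to a matrix of trivial Alexander polynomial.}

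I would prove this by induction on $e$, the case $e=0$ being $U=0$. If $e\geq1$ then $\det(tM-M^{T})$ vanishes at $t=0$, i.e. $\det M=0$, so $P:=\ker M$ is a nonzero saturated sublattice, and it is isotropic for $\omega$ since $Mp=0$ for $p\in P$. Using integral symplectic Gram–Schmidt over the unimodular lattice $(\mathbb{Z}^{2g},\omega)$, extend a basis of $P$ to a symplectic basis in which $P\oplus P^{*}$ is a symplectic summand with orthogonal complement $W$. In the adapted basis
\[
M=\begin{pmatrix} 0 & I & 0\\ 0 & Q & R\\ 0 & R^{T} & M_W\end{pmatrix},
\]
the zero first block-column recording $P\subseteq\ker M$ and the first block-row $(0,I,0)$ recording that the $i$-th row of $M$ is the $\omega$-dual of the $i$-th vector of $P$. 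A short row/column reduction using this first block-row gives
\[
\det(tM-M^{T})\doteq t^{\nu}\det(tM_W-M_W^{T}),\qquad \nu:=\mathrm{rank}\,P\geq1,
\]
so $M_W$ is again a matrix with $\det(M_W-M_W^{T})=\pm1$, of smaller size, with order of vanishing $e-\nu<e$ at $0$. By induction $W$ has a rank-$2(e-\nu)$ summand $U'$, unimodular for $\omega$, on which $M_W$ has trivial Alexander polynomial; I would then take $U:=(P\oplus P^{*})\oplus U'$. This is a rank-$2e$ symplectic summand, $M|_U$ has the same block shape as above with $M_W$ replaced by $M_W|_{U'}$, and the same determinant identity yields $\det(tM|_U-(M|_U)^{T})\doteq t^{\nu}\det(tM_W|_{U'}-(M_W|_{U'})^{T})\doteq t^{e}$, i.e. trivial Alexander polynomial.

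For the geometric step I would realize $U\subseteq H_1(S)$ by a subsurface: take a standard genus-$e$ subsurface $C_0\subseteq S$ with connected complement, use that the mapping class group of $S$ rel $\partial S$ surjects onto $\mathrm{Sp}(2g,\mathbb{Z})$ and that $\mathrm{Sp}(2g,\mathbb{Z})$ acts transitively on rank-$2e$ symplectic summands to find a homeomorphism $\Phi$ of $S$ with $\Phi_{*}(H_1(C_0))=U$, and put $C:=\Phi(C_0)$, $L:=\partial C$. Since $C\subseteq S\subseteq S^{3}$, it is a Seifert surface for $L$ whose Seifert form is the restriction of $M$ to $H_1(C)=U$, so $\Delta_L$ is trivial; and $L$ bounds the connected subsurface $C$ with connected complement, so $L$ separates, $C$ is the component of $S\setminus L$ missing $K$, and $2\,\mathrm{genus}(C)=2e=2\,\mathrm{genus}(S)-\deg\Delta_K$.

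The step I expect to be the real work is the algebraic lemma, specifically pinning down the $\ker M$ block normal form over $\mathbb{Z}$ and verifying the identity $\det(tM-M^{T})\doteq t^{\nu}\det(tM_W-M_W^{T})$, which is exactly what makes the induction close. One must resist expecting $e$ to equal the nullity of $M$: it can be strictly larger (for instance there are genus-$2$ Seifert matrices of Alexander-polynomial-one knots with nullity $1$ but $e=2$), which is precisely why a single application of the $\ker M$ construction is insufficient and an induction is needed. The surface-topology realization in the last paragraph is routine by comparison.
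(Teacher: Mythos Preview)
Your argument is correct and follows essentially the same strategy as the paper's proof: an inductive linear-algebra reduction that peels off the kernel of the Seifert matrix together with its symplectic dual (the paper does this one kernel vector at a time and obtains a slightly more explicit normal form), followed by a geometric realization of the resulting symplectic summand as a subsurface via the symplectic action of the mapping class group. One small inaccuracy: the $(3,2)$-block of your displayed $M$ need not equal $R^{T}$ without a further basis change (the paper performs such a change explicitly), but since your determinant identity uses only the first block-row and block-column this does not affect the argument.
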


The reduction of Theorem~\ref{thm:alexupperboundtopslicegenus} to Proposition~\ref{prop:splitting} and Freedman's result is rather direct. The same idea was used by Rudolph to provide examples of torus knots for which the topological slice genus is smaller than their genus~\cite[Theorem~2]{Rudolph_84_SomeTopLocFlatSurf}.
\begin{proof}[Proof of Theorem~\ref{thm:alexupperboundtopslicegenus}]
For a given knot $K$, let $L\subset S$ be a simple closed curve in some Seifert surface $S$ of $K$ with the properties described in Proposition~\ref{prop:splitting}.
By removing the connected component $C$ of $S\setminus L$ that does not contain $K$, one obtains a surface $\overline{S\setminus C}\subset S^3$ with boundary $K\dot{\cup} L$. By Freedman's work~\cite[Theorem~1.13]{Freedman_82_TheTopOfFour-dimensionalManifolds}, $L$ bounds a topological locally-flat disc $D$ in $B^4$. Gluing $\overline{S\setminus C}$ and $D$ along $L$ yields a locally-flat surface $S^{\textrm{top}}$ of genus $\frac{\deg(\Delta_K)}{2}$ in $B^4$ with boundary $K\subset S^3=\partial B^4$. To be explicit, $S^{\textrm{top}}$ can be given as follows: shrink $D$ by a factor of $2$, yielding a disc in the $4$-ball $B^4_{\frac{1}{2}}$ of radius $\frac{1}{2}$ with boundary $L$ viewed as a knot in the $3$-sphere of radius $\frac{1}{2}$. Then, embed $S\setminus C$ in $B^4\setminus B^4_{\frac{1}{2}}= S^3\times(\frac{1}{2},1]$ via the map
\[S\setminus C\to S^3\times(\frac{1}{2},1],\quad x\mapsto \left(x, \frac{1}{2}+\frac{\mbox{dist}(L,x)}{2\mbox{dist}(K,x)+2\mbox{dist}(L,x)}\right),\]
and set $S^{\textrm{top}}\subset B^4$ to be the union of the shrunken $D$ and the embedded $S\setminus C$.
\end{proof}
We conclude the introduction by describing previous work relating $\deg(\Delta_K)$ and $g_4^{\rm top}$.
Borodzik and Friedl proved that $\deg(\Delta_K)+1$ is an upper bound for the algebraic unknotting number $u_a$, which follows from combining~\cite[Lemma~2.3]{BorodzikFriedl_15_TheUnknottingnumberAndClassInv1} and~\cite[Theorem~1.1]{BorodzikFriedl_14_OnTheAlgUnknottingNr}.
Since $g_4^{\rm top}\leq u_a$, this yields that
$g_4^{\rm top}\leq \deg(\Delta_K)+1$. 

{\bf{Acknowledgements}:}
The present article grew out of questions that arose during a joint effort with Sebastian Baader, Lukas Lewark, and Livio Liechti to determine the topological slice genus for families of torus knots and positive knots. I thank them for their helpful remarks; in particular, I thank Lukas for improving the statement of Proposition~\ref{prop:splitting}. Thanks also to Josh Greene for valuable inputs and Stefan Friedl for pointing me to work on the algebraic unknotting number. Finally, I thank the referee for helpful suggestions.
\section{Applications}\label{sec:app}
Combining Theorem~\ref{thm:alexupperboundtopslicegenus} with classical bounds for the topological slice genus, e.g.~Kauffman and Taylor's signature bound~\cite[Theorem~3.13]{KauffmanTaylor_76_SignatureOfLinks}, yields simple criteria to determine the topological slice genus. Indeed, let $\sigma(K)$ denote the signature of a knot $K$ as introduced by Trotter~\cite{Trotter_62_HomologywithApptoKnotTheory}, i.e.~the signature $\s(M+M^T)$ of the symmetrization $M+M^T$ of any Seifert matrix $M$ for the knot $K$.
\begin{corollary}\label{cor:sign=deg}
For every knot $K$, we have
\[\left|\sigma(K)\right|
{\leq} 2g_4^{\rm top}(K)
{\leq} \deg(\Delta_K).\]
In particular, if $\left|\sigma(K)\right|=\deg(\Delta_K)$, then $g_4^{\rm top}(K)$ equals $\left|\frac{\sigma(K)}{2}\right|=\frac{\deg(\Delta_K)}{2}$.\qed
\end{corollary}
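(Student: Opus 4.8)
The plan is to chain together two bounds for the topological slice genus. The right-hand inequality $2g_4^{\rm top}(K)\leq\deg(\Delta_K)$ needs no work: it is exactly Theorem~\ref{thm:alexupperboundtopslicegenus}, already proved above. For the left-hand inequality $|\sigma(K)|\leq 2g_4^{\rm top}(K)$ I would invoke Kauffman and Taylor's signature estimate~\cite[Theorem~3.13]{KauffmanTaylor_76_SignatureOfLinks}: for any locally-flat oriented surface $S\subset B^4$ of genus $g$ with $\partial S=K$, comparing the signature and second Betti number of a branched cover of $B^4$ along a push-in of $S$ gives $|\sigma(K)|\leq 2g$; minimizing over all such $S$ yields $|\sigma(K)|\leq 2g_4^{\rm top}(K)$. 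Together with Theorem~\ref{thm:alexupperboundtopslicegenus} this is the displayed chain of inequalities.

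For the ``in particular'' clause I would argue by squeezing. If $|\sigma(K)|=\deg(\Delta_K)$, then $|\sigma(K)|\leq 2g_4^{\rm top}(K)\leq\deg(\Delta_K)=|\sigma(K)|$ forces both inequalities to be equalities, so $2g_4^{\rm top}(K)=|\sigma(K)|=\deg(\Delta_K)$; dividing by $2$ and using $|\sigma(K)|/2=|\sigma(K)/2|$ gives $g_4^{\rm top}(K)=\left|\frac{\sigma(K)}{2}\right|=\frac{\deg(\Delta_K)}{2}$.

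The only genuinely non-formal point is the signature bound, and there the issue is categorical rather than computational: Kauffman and Taylor's estimate is classically stated for smooth slice surfaces, so I would verify that its ingredients---a normal bundle (collar) for the locally-flat surface, the construction and homology of the branched cover, and the intersection-form computation---all survive in the topological locally-flat category. I expect this to be the main obstacle, but a mild one: locally-flat surfaces in $4$-manifolds admit normal bundles, and the intersection-form machinery for topological $4$-manifolds applies, so the bound carries over and the corollary follows at once.
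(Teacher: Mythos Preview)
Your proposal is correct and matches the paper's approach exactly: the paper states the corollary with a \qed, having noted just before that it follows by combining Theorem~\ref{thm:alexupperboundtopslicegenus} with Kauffman and Taylor's signature bound~\cite[Theorem~3.13]{KauffmanTaylor_76_SignatureOfLinks}. Your additional remarks about the locally-flat category are a welcome elaboration but go beyond what the paper itself spells out.
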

\begin{Example}
 We describe an infinite family of knots for which 
 $g_4^{\rm top}$ is arbitrarily large, while being arbitrarily smaller than the smooth slice genus $g_4$.

For any positive integer $g$, any integer $2g\times 2g$-matrix $M$ for which $M-M^T$ has determinant $1$ describes the Seifert form on a Seifert surface $S$ bounded by some knot $K$; in fact, $S$ can be chosen to be a quasipositive Seifert surface as proven by Rudolph~\cite{Rudolph_83_ConstructionsOfQP1}\cite[Theorem 1.2]{Rudolph_89_QPandnewKnotInv}. 
If one chooses $M$ to satisfy \[\left|{\s(M+M^T)}\right|=\deg\left(\det\left(M\sqrt{t}-M^T\frac{1}{\sqrt{t}}\right)\right)<2g,\]
then one has examples of knots $K$ for which
\[\deg(\Delta_K)=2g_4^{\rm top}(K)<2g=2g_4(K),\]
by Corollary~\ref{cor:sign=deg} and the fact that quasipositive surfaces realize the smooth slice genus; see Rudolph's slice-Bennequin inequality~\cite{rudolph_QPasObstruction}.
Of course, the above examples include knots for which $g_4^{\rm top}$ is determined by Freedman's result; e.g.~if $K$ has trivial Alexander polynomial, or if $K$ is a connected sum of knots with trivial Alexander polynomial and knots for which $g=|\frac{\s}{2}|$.
However, for most knots $K$ as above, we do not know of a method that determines the topological slice genus and that does not use Theorem~\ref{thm:alexupperboundtopslicegenus}.
\end{Example}
Next, we apply Corollary~\ref{cor:sign=deg} to knots with small crossing number:
\begin{Example}
We determine the topological slice genus of the following knots, which can be represented by diagrams with 12 crossings.
One has ${\s}=-\deg(\Delta_K)=-4$ for the two knots $12n830$ and $12n750$,
${\s}=\deg(\Delta_K)=2$ for the two knots $12n519$ and $12n411$,
and ${\s}=-\deg(\Delta_K)=-2$ for $12n321$ and $12n293$; where designations are as in \emph{KnotInfo}~\cite{knotinfo}.
Previously, the topological slice genus appears to have been unknown for all these knots; compare~\cite{knotinfo}. We remark that, for $12n830$ and $12n750$, the smooth slice genus is known to be $3$, and for $12n321$ and $12n293$ it is known to be 2; in particular, it is strictly larger than the topological slice genus; while for $12n519$ and $12n411$ the smooth slice genus appears to be unknown (it is either 1 or 2). In Section~\ref{sec:12n750}, we discuss the knot $12n750$ explicitly.
\end{Example}

\section{Proof of Proposition~\ref{prop:splitting}}\label{sec:proof}
We provide a sketch of our proof of
Proposition~\ref{prop:splitting}.
For a given knot $K$,
we fix (in all of Section~\ref{sec:proof})
a Seifert surface $S$ 
and denote its genus by $g$.
We find a basis for $H_1(S,\Z)$ for which the corresponding Seifert matrix $M$ is of the following form:
$M-M^T$ is the standard symplectic form on $\Z^{2g}$ and the bottom right corner of $M$ is a square matrix $N$ of size
${2g-\deg(\Delta_K)}$ which represents the Seifert form of a knot with trivial Alexander polynomial. 
Then we represent this basis by simple closed curves such that for all pairs of curves the geometric intersection number equals the algebraic intersection number and choose a curve $L$ that separates the curves that represent the last $2g-\deg(\Delta_K)$ elements of this basis. Thus, $N$ is a Seifert matrix for $L$ and, therefore, $L$ has trivial Alexander polynomial.
We note that,
if $\Delta_K=1$, then $L$ is parallel to $K$ and this proof essentially reduces to the proof of
~\cite[Lemma~4.2]{GaroufalidisTeichner_04_OnKnotswithtrivialAlex}; see Remark~\ref{Rem:Obs:M}.

 In order to provide a detailed proof of Proposition~\ref{prop:splitting}, we
recall some facts about Seifert matrices and bilinear forms. 
By choosing
a basis for $H_1(S,\Z)$, i.e.~by identifying $H_1(S,\Z)$ with $\Z^{2g}$, the Seifert form becomes a bilinear form on $\Z^{2g}$, which is canonically identified with a ${2g}\times2g$-matrix $M$\textemdash a \emph{Seifert matrix}.
The skew-symmetrization
$M-M^T$ of $M$ represents the \emph{intersection form} $I$ on $H_1(S,\Z)$ (with respect to the same basis) and, therefore, has determinant $1$.
A change of basis
amounts to changing $M$ to $A^TMA$ for some $\Z$-invertible $2g\times 2g$-matrix $A$ (which amounts to performing a finite number of elementary column operations and their corresponding elementary row operations on $M$).
Fix a skew-symmetric bilinear form $F$ 
on a finitely generated free abelian group $V$
, e.g.~$I$ on $H_1(S,\Z)$.
A basis for $V$ is called \emph{symplectic} (with respect to $F$), if the corresponding matrix representing $F$, 
e.g.~$M-M^T$,
is the standard symplectic form 
\[\left[\begin{array}{cc}
0&-1\\ 1&0
\end{array}\right]\oplus\cdots\oplus\left[\begin{array}{cc}
0&-1\\ 1&0
\end{array}\right].\]
A necessary and sufficient condition for the existence of a symplectic basis for $V$ is that $F$ is invertible, i.e.~has determinant $1$ when identified with a matrix.

\begin{proof}[Proof of Proposition~\ref{prop:splitting}]
First, we make an observation from linear algebra:
 \begin{lemma}\label{lemma:M} 
 There exists a basis $B$ for $H_1(S,\Z)$ and a non-negative integer $d$ such that the Seifert form on $H_1(S,\Z)$ with respect to $B$ is given by a
 $2g\times 2g$-matrix of the following form:
\begin{equation}\label{eq:M}\left[
\begin{tabular}{lcc|cc}{\begin{tabular}{ccc|cc}&&&&0\\
&$M_{2d}$&&$v_{g-d}$&$\vdots$\\
&&&&0\\\hline
&$v_{g-d}^T$&&0&0
\\
0&$\cdots$&0&1
&0\\
\end{tabular}}&&
&\begin{tabular}{c}
\\
\\
\\
$v_{1}$\\
\end{tabular}&\begin{tabular}{c}
0\\
\\
\\
$\vdots$\\
\\
\end{tabular}\\
&$\ddots$&\\
&&&&0\\
\hline
$\quad\quad\quad\quad\quad\quad  v_{1}^T$&&&0&0
\\
0$\quad0\quad\quad\quad\quad\cdots$&0&0&1
&0\\
\end{tabular}\right],\end{equation}
where $M_{2d}$ is a $2d\times 2d$-matrix with non-zero determinant and $v_i$ are column vectors with $2g-2i$ entries. Furthermore,
the degree of $\Delta_K$ equals $2d$.
\end{lemma}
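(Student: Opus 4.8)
The plan is to prove Lemma~\ref{lemma:M} as a statement in linear algebra about the Seifert form alone. View it as a bilinear form $\beta$ on $V:=H_1(S,\Z)\cong\Z^{2g}$ whose skew-symmetrization $\beta-\beta^{T}$ equals the intersection form $I$, a unimodular symplectic form on $V$. The proof is an induction on $g$ whose inductive step splits off one ``hyperbolic layer'' of the shape occurring in~\eqref{eq:M} whenever the Seifert matrix is singular; the exponent $d$ is then forced, and the equality $\deg(\Delta_K)=2d$ drops out of a short determinant computation.

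For the inductive step, fix a Seifert matrix $M$ for $\beta$. If $\det M\neq0$ we are done with $d=g$ and $M_{2d}=M$ (and no layers). Otherwise the right radical $R=\{v\in V:\beta(w,v)=0\text{ for all }w\}$ is nonzero; choose a primitive $v\in R$. Since $v$ lies in the radical, $\beta(v,-)=I(v,-)$, a primitive functional because $I$ is unimodular, so there is $u\in V$ with $\beta(v,u)=1$; consequently $I(v,u)=\beta(v,u)=1$ and $\beta(u,v)=\beta(v,u)-I(v,u)=0$, and replacing $u$ by $u-\beta(u,u)\,v$ we may also assume $\beta(u,u)=0$ without spoiling $\beta(v,u)=1$. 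As $I(v,u)=1$, the sublattice $H:=\langle u,v\rangle$ is a symplectic rank-$2$ summand and $V=H\oplus W$ with $W:=H^{\perp_I}$ a primitive rank-$(2g-2)$ summand on which $I$ is again unimodular. In the basis $(\text{a basis of }W,\,u,\,v)$ the form $\beta$ is exactly
\[
\begin{bmatrix} M_W & v_1 & 0\\ v_1^{T} & 0 & 0\\ 0 & 1 & 0\end{bmatrix},
\qquad v_1(w):=\beta(u,w),
\]
because $\beta(v,W)=I(v,W)=0$, $\beta(W,v)=0$ since $v\in R$, and $\beta(w,u)=\beta(u,w)$ for $w\in W$ since $W\subseteq\ker I(u,-)$; moreover $M_W-M_W^{T}=I|_W$. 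Applying the induction hypothesis to $(W,\beta|_W)$ and prepending this layer produces the nested matrix~\eqref{eq:M}, the $i$-th layer carrying a vector with $2g-2i$ entries; a final symplectic change of basis inside the $M_{2d}$-block---which leaves the layers untouched, being $I$-orthogonal to that block---arranges that $\beta-\beta^{T}$ is the standard symplectic form.

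For the ``furthermore'', put $M$ in the form~\eqref{eq:M} and expand $\det(tM-M^{T})$ along its last column: the only nonzero entry is the $-1$ in the penultimate row, and the resulting minor is block triangular with diagonal blocks $tM'-M'^{T}$ and $t$, where $M'$ is $M$ with the outermost layer removed; hence $\det(tM-M^{T})=t\cdot\det(tM'-M'^{T})$. Iterating $g-d$ times gives $\det(tM-M^{T})=t^{\,g-d}\det(tM_{2d}-M_{2d}^{T})$. The polynomial $\det(tM_{2d}-M_{2d}^{T})$ has $t^{2d}$-coefficient $\det(M_{2d})\neq0$ and constant term $\det(-M_{2d}^{T})=\det(M_{2d})\neq0$, so its breadth is exactly $2d$; since $\Delta_K=t^{-g}\det(tM-M^{T})=t^{-d}\det(tM_{2d}-M_{2d}^{T})$ as Laurent polynomials, $\deg(\Delta_K)=2d$.

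The step I expect to be the main obstacle is making the splitting work over $\Z$ rather than just over $\Q$; this is exactly where unimodularity of $I$ is used, both to guarantee that $\beta(v,-)=I(v,-)$ surjects onto $\Z$ (so that $u$ with $\beta(v,u)=1$ exists) and to guarantee that a pair $(v,u)$ with $I(v,u)=1$ splits off a genuine $\Z$-direct summand $W=H^{\perp_I}$. The remaining tasks---tracking the sizes of the layer vectors through the induction and checking that each newly exposed core is $I$-orthogonal to the previously peeled-off layers---are routine bookkeeping.
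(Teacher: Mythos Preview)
Your proof is correct and follows essentially the same approach as the paper: choose a primitive vector $v$ in the (right) kernel of the Seifert form, use unimodularity of the intersection form to find a dual vector $u$, adjust so that the resulting $2\times2$ block is $\left[\begin{smallmatrix}0&0\\1&0\end{smallmatrix}\right]$, and induct on the complementary rank-$(2g-2)$ summand. The paper phrases these steps as explicit matrix manipulations (zeroing the last column, Euclidean algorithm on the last row, a final basis change for $x=0$ and $v_1=w_1$), whereas you phrase them more invariantly via the right radical and the $I$-orthogonal splitting $V=H\oplus H^{\perp_I}$; in particular your observation that $\beta(w,u)=\beta(u,w)$ for $w\in W$ is exactly the content of the paper's unexplained ``$v_1=w_1$'' step. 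Your final symplectic normalization inside the $M_{2d}$-block is not part of the lemma itself (the paper does it afterwards, in the proof of Proposition~\ref{prop:splitting}), but it is harmless.
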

\begin{proof} Let $M_{2g}$ be a Seifert matrix representing the Seifert form on $H_1(S,\Z)$ with respect to some basis. 
We consider the case $\det(M_{2g})=0$ 
as otherwise the statement is trivial. Thus, by a change of basis, 
we can arrange that the last column of $M_{2g}$ consists of $0$'s only (this is done by choosing a primitive vector in the kernel of $M_{2g}$ and extending it to a basis). From $\det(M_{2g}-M_{2g}^T)=1$
, we deduce that the greatest common divisor of the entries of the last row of $M_{2g}-M_{2g}^T$, which is equal to the last row of $M_{2g}$, is $1$. Therefore, we can change basis again (by performing elementary column operations on $M_{2g}$ simulating the Euclidean algorithm that yields the greatest common divisor of the last row) 
such that the corresponding Seifert matrix takes the form
\[\begin{tabular}{ccc|cc}&&&&0\\
&$M_{{2g}-2}$&&$w_{1}$&$\vdots$\\
&&&&0\\\hline
&$v_{1}^T$&&x&0
\\
0&$\cdots$&0&1
&0\\
\end{tabular},\]
where $M_{{2g}-2}$ is a $({2g}-2)\times ({2g}-2)$-matrix, $x$ is an integer, and $v_1$ and $w_1$ are column vectors with ${2g}-2$ entries. By changing basis once more, 
we can arrange that $x=0$ and $v_1=w_1$. 
The statement of Lemma~\ref{lemma:M} follows by induction on $g$, and $2d=\deg(\Delta_K)$ is immediate from the fact that calculating $\Delta_K$ using a Seifert matrix of the form~\eqref{eq:M} yields
\[\Delta_K=\det\left(M_{2d}\sqrt{t}-M_{2d}^T\frac{1}{\sqrt{t}}\right),\]
which has degree ${2d}$ since $\det(M_{2d})$ is non-zero.\end{proof}

Next, we establish that there is also a symplectic (with respect to $I$) basis for $H_1(S,\Z)$ for which the corresponding Seifert matrix is of the form~\eqref{eq:M}; in fact, this follows from a version of Witt's Theorem.
Let $B$ be a basis as provided by Lemma~\ref{lemma:M}, and let $M_{2g}$ be the corresponding Seifert matrix. 
We write $H_1(S,\Z)=V_1\oplus V_2$, where $V_1$ denotes the subgroup spanned by the first $2d$ elements of $B$ and $V_2$ denotes the subgroup spanned by the other elements of $B$. Denote the lower right square of size $2g-2d$ of $M_{2g}$ by $N_{2g-2d}$. 
By~\eqref{eq:M}, we have that $M_{2g}-M_{2g}^T$ 
is
\[\left[\begin{array}{cc}M_{2d}-M_{2d}^T & 0\\0&N_{2g-2d}-N_{2g-2d}^T\end{array}\right],\] where $N_{2g-2d}-N_{2g-2d}^T$
equals the standard symplectic form on $\Z^{2g-2d}$. Since $M_{2d}-M_{2d}^T$ is invertible (which follows from $M_{2g}-M_{2g}^T$ being invertible), there is a symplectic (with respect to the restriction of $I$) basis $B_{V_1}$ for $V_1$. 
Let $B_\textrm{sympl}=(a_{1},b_{1},\cdots,a_{g},b_g)$ denote the basis for $H_1(S,\Z)$ obtained by replacing the first $2d$ elements of $B$ by $B_{V_1}$. By construction, $B_\textrm{sympl}$ is symplectic. The corresponding Seifert matrix $M_\textrm{sympl}$ is of the form~\eqref{eq:M} since $M_\textrm{sympl}$ is obtained from $M_{2g}$ by column (row) operations that involve only the first $2d$ columns (rows); in particular, $N_{2g-2d}$ remains unchanged.

Since $B_\textrm{sympl}$ is a symplectic basis for $I$, it
can be realized geometrically; i.e., for all $1\leq i\leq g$,
there exist simple closed curves $\alpha_i$ and $\beta_i$ in $S$ representing the classes $a_i$ and $b_i$, respectively, such that $\alpha_{i}$ intersects $\beta_{i}$ once transversally 
and no other intersections occur
; see e.g.~Farb and Margalit's book~\cite[Theorem 6.4]{FarbMargalit_12_APrimerOnMCG}.
Let $L$ be any simple closed curve in $S$ separating the curves
\[K,\alpha_1,\beta_1,\cdots,\alpha_{d},\beta_d\quad\text{from}\quad
\alpha_{d+1},\beta_{d+1},\cdots,\alpha_{g},\beta_{g},\] and denote the component of $S\setminus L$ containing $\alpha_{d+1},\beta_{d+1},\cdots,\alpha_{g},\beta_{g}$ by $C$.
The existence of such a curve $L$ is evident since
\[S\setminus (K\cup\alpha_1\cup\beta_1\cup\cdots\cup\alpha_{g}\cup\beta_{g})\] is a $g+1$ punctured sphere.
The surface $C$ has genus $g-d$ and is a Seifert surface for $L$.
Furthermore, the Seifert matrix corresponding to the basis \[([\alpha_{d+1}],[\beta_{d+1}],\cdots,[\alpha_{g}],[\beta_{g}])\] for $H_1(C,\Z)$ is $N_{2g-2d}$. 
Therefore, we have
\[\Delta_L=\det\left(N_{2g-2d}\sqrt{t}-N_{2g-2d}^T\frac{1}{\sqrt{t}}\right)\overset{\eqref{eq:M}}{=}1.\qedhere\]
\end{proof}
Lukas Lewark pointed out Lemma~\ref{lemma:M}. Originally, following the arguments in~\cite[Lemma~2]{Freedman_82_ASurgerySequenceInDimFour} and~\cite[Lemma~4.2]{GaroufalidisTeichner_04_OnKnotswithtrivialAlex}, 
we used changes of basis and $S$-equivalences to obtain a Seifert matrix of the form~\eqref{eq:M}, which only yields the following weaker version of Proposition~\ref{prop:splitting}. Every Seifert surface can be stabilized to a Seifert surface that contains a knot with the properties described in Proposition~\ref{prop:splitting}. We note that this version still suffices to establish Theorem~\ref{thm:alexupperboundtopslicegenus}.

The author greatly profited from the nice presentation of the Freedman's result by Garoufalidis and Teichner~\cite[Appendix]{GaroufalidisTeichner_04_OnKnotswithtrivialAlex}, where smooth $S^3$- and $B^4$-arguments are clearly separated from the application of Freedman's machinery. In fact, before discovering Proposition~\ref{prop:splitting}, which allows one to reduce Theorem~\ref{thm:alexupperboundtopslicegenus} to a single application of the fact that knots with trivial Alexander polynomial are topologically slice, our proof of Theorem~\ref{thm:alexupperboundtopslicegenus} closely followed the argument in~\cite[Appendix]{GaroufalidisTeichner_04_OnKnotswithtrivialAlex}. The following remark is related to their presentation:
\begin{Remark}\label{Rem:Obs:M}
In the case of $\Delta_K=1$, the proof of Proposition~\ref{prop:splitting} reduces to the following slight improvement of a Lemma~\cite[Lemma~4.2, first part]{GaroufalidisTeichner_04_OnKnotswithtrivialAlex} by Garoufalidis and Teichner. If a knot has trivial Alexander polynomial, then every Seifert surface has a trivial Alexander basis in the language of~\cite[Definition~4.1]{GaroufalidisTeichner_04_OnKnotswithtrivialAlex}. This follows by considering the basis $(b_1,\cdots,b_g,a_1,\cdots,a_g)$ instead of $B_\textrm{sympl}$.
\end{Remark}

\section{Explicit example: The knot $12n750$ and its genera}
\label{sec:12n750}
For the knot $K=12n750$, which is the closure of the $3$-braid
\begin{equation}\label{eq:12n750}aaaba^{-1}baaaba^{-1}b,\quad\footnote{Here, $a$ and $b$ denote the standard generators of the 3-strand braid group corresponding to a positive crossing of the first two and the last two strands, respectively.}\end{equation} we exhibit the curve $L$ from Proposition~\ref{prop:splitting} explicitly. Let $S$ be the Seifert surface $S$ of $K$ depicted in Figure~\ref{fig:12n750}.
\begin{figure}[h]
\centering
\def\svgscale{0.5}
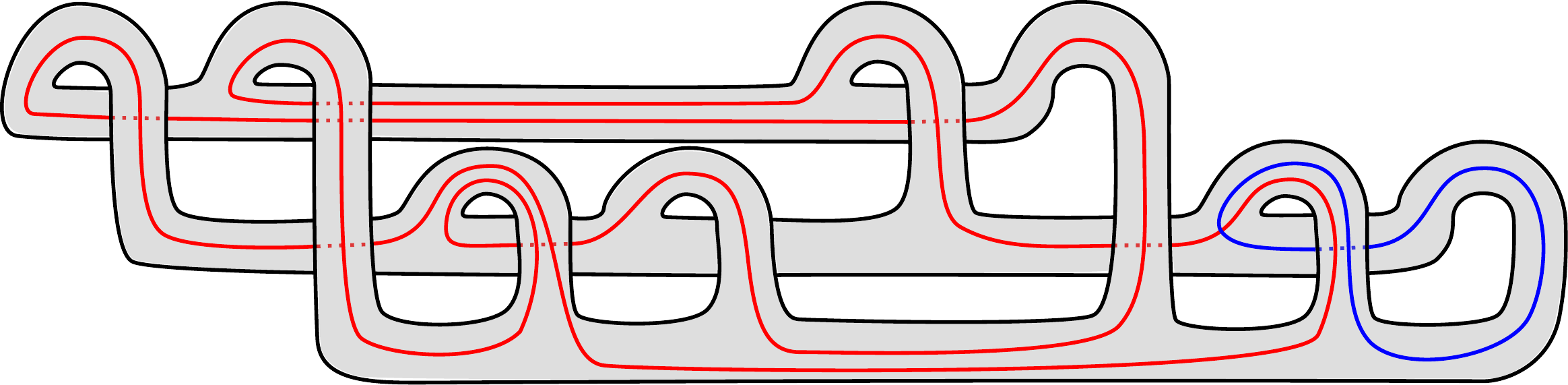
\caption{A quasipositive Seifert surface $S$ (gray) for the knot $12n750$ (black) containing two simple closed curves $\alpha$ (red) and $\beta$ (blue)
such that a neighborhood of $\alpha\cup\beta$ bounds a knot $L$ with trivial Alexander polynomial.}
\label{fig:12n750}
\end{figure}
The Seifert surface $S$ has genus $3$ and it realizes the genus and the smooth slice genus of $K$
since it is quasipositive~\cite[slice-Bennequin inequaility]{rudolph_QPasObstruction}; in fact, $S$ is the quasipositive surface canonically associated with the strongly quasipositive braid word given in~\eqref{eq:12n750}; compare~\cite{Rudolph_83_ConstructionsOfQP1,Rudolph_89_QPandnewKnotInv}. Let $\alpha$ and $\beta$ be the two once-intersecting simple closed curves depicted in Figure~\ref{fig:12n750}. A neighborhood of their union is a one-holed torus $T$ with a boundary curve $L=\partial T$ that has trivial Alexander polynomial. The latter follows since the Seifert form on $T$ with respect to the basis given by the homology classes of $\alpha$ and $\beta$ is $\left[\begin{array}{cc}
0&1\\ 0&-1
\end{array}\right]$. Now, (as in the proof of Theorem~\ref{thm:alexupperboundtopslicegenus}) one can modify $S$ by replacing $T$ by a locally-flat disc in $B^4$ to find a locally-flat surface $S^{\textrm{top}}$ of genus 2 in $B^4$ with boundary $K$. Calculating the signature of $K$ (it is $-4$) shows that $S^{\textrm{top}}$ realizes $g_4^{\rm top}(K)=2$.
\newpage
\def\cprime{$'$}

\end{document}